\newtheorem{theorem}{Theorem}
\newtheorem{lemma}{Lemma}
\newtheorem{corollary}{Corollary}
\theoremstyle{definition}
\newcommand{\vctr}[1]{\vec{#1}}
\title{Irreducibility of lacunary polynomials with 0,1 coefficients}
\author{
Alexandros Kalogirou \\
Mathematics Department \\
University of South Carolina \\
Columbia, SC 29208 \\
Email: kalogira@email.sc.edu}
\newcommand{\Q}{\mathbb{Q}}
\newcommand{\Z}{\mathbb{Z}}
\newcommand{\Po}{\mathbb{P}}
\begin{document}
\maketitle
\begin{abstract}
We show that $0,1$-polynomials of high degree and few terms are irreducible with high probability. Formally, let $k\in\mathbb{N}$ and $F(x)=1+\sum_{i=1}^kx^{n_i}$, where $ 0<n_1<\cdots<n_k\leq N. $ Then we show that $\lim_{k\rightarrow\infty}\limsup_{N\rightarrow\infty}\Po(\text{$F(x)$ is reducible})=0.$ The probability in this context is derived from the uniform count of polynomials $F(x)$ of the above form.
\end{abstract}
\section{Introduction}
Irreducible polynomials are abundant. In 1936, B. L. van der Waerden \cite{Waerden}, proved that polynomials of the form $f(x)=x^k+\sum_{i=0}^{k-1}a_ix^i,$ with fixed $k$ where each $a_i$ is chosen uniformly and independently from $[-H,H],$ are irreducible with probability tending to 1, as $H$ grows to infinity. This is the so called large box model.

Things have been slower when fixing the height of the polynomial family and letting the degree grow. In particular, it was shown only in 2020 by L. Bary-Soroker and G. Kozma \cite{Bary} that if all $a_i\in[1,210]$, then as the degree $k$ tends to infinity, polynomials as above are irreducible with probability $1$. This was improved for more general distributions and with several strengthenings of the conclusion in follow up work by the same authors and D. Koukoulopoulos \cite{BKK}, showing among other results that the $a_i$'s can be chosen uniformly from the interval $[1,35]$.

There seems to be however a limit in the interval range that does not allow us to settle a conjecture by A. M. Odlyzko and B. Poonen \cite{OP} posed in 1993, claiming that almost all $0,1$-polynomials are irreducible, by letting the degree grow. Although the authors in \cite{BKK} show remarkably that a positive proportion of $0,1$-polynomials are irreducible, the most notable progress till that point was made by S. Konyagin (1999) \cite{Sergei}, who showed that at least $c2^k/\log k$ of $0,1$-polynomials of degree $k$ are irreducible. Konyagin's work was used crucially in \cite{Bary}.

We are hereby considering a twist to the above problem, by focusing instead on lacunary $0,1$-polynomials with few terms. It is worth pointing out that all three variants mentioned, including ours, use very different methods.

We start with a $0,1$-polynomial $F(x)=1+\sum_{i=1}^k x^{n_i}$, $0<n_1<\cdots<n_k\leq N,$ of degree at most $N$, whose constant coefficient is $1$ and which has $k$ non-zero terms, excluding the constant coefficient. We aim at proving the following.
\begin{theorem}
For $F(x)$ as above, we have
\begin{equation}
\lim_{k\rightarrow\infty}\limsup_{N\rightarrow\infty}\Po(\text{$F(x)$ is reducible})=0.
\end{equation}
Here the probability is defined as the proportion of $F(x)$ as above that are reducible.
\end{theorem}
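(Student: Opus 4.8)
The plan is to bound separately the chance that $F$ acquires a cyclotomic factor and the chance that its non-cyclotomic part genuinely splits. By Gauss's lemma we may work in $\Z[x]$ and take any factor to be monic with constant term $\pm1$. Write $F=\Phi\Psi$ with $\Phi$ the product (with multiplicity) of the cyclotomic irreducible factors of $F$ and $\Psi$ the non-cyclotomic part. If $F$ is reducible then either $\deg\Phi\ge1$, or $\Psi$ is a product of at least two non-cyclotomic irreducibles, in which case $F$ has a non-cyclotomic irreducible factor $G$ with $1\le\deg G\le\tfrac12\deg F$. (A trivial count eliminates the negligible possibility that $F$ is itself an irreducible cyclotomic polynomial, which forces the exponents into an arithmetic progression with constant difference a prime power.) Thus it suffices to show that $\Po(\deg\Phi\ge1)$ and $\Po(\Psi\text{ properly factors})$ each tend to $0$ upon letting $N\to\infty$ and then $k\to\infty$.

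\textbf{Cyclotomic factors.} Here $\Phi_m\mid F$ is equivalent to $1+\zeta_m^{n_1}+\cdots+\zeta_m^{n_k}=0$, a vanishing sum of $k+1$ roots of unity. Averaging this relation over the Galois conjugates of $\zeta_m$ yields the Ramanujan-sum identity $\varphi(m)+\sum_i c_m(n_i)=0$, which already forces the least prime factor of $m$ to be at most $k+1$; a further Galois descent shows that moduli $m$ divisible by a prime exceeding $k+1$ contribute $o(1)$ as $N\to\infty$, since such an $m$ forces several of the $n_i$ to share large prime divisors. For the remaining, essentially $(k+1)$-smooth $m$, the residues $n_i\bmod m$ become independent and uniform as $N\to\infty$, and $\Phi_m\mid F$ amounts to the count vector $(\#\{i:n_i\equiv j\})_j$ lying in a fixed sublattice of corank $\varphi(m)$ (Mann's theorem governs which lattices arise); a local central limit estimate for the multinomial distribution then gives $\limsup_N\Po(\Phi_m\mid F)\ll_m k^{-\varphi(m)/2}$ when the pertinent divisibility conditions can be met, and $0$ otherwise. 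Summing over $m$, the total is $O(k^{-1/2})$ — dominated by $m=2$ when $k$ is odd — hence tends to $0$.

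\textbf{Non-cyclotomic factors of bounded degree.} The key observation is that for a fixed non-cyclotomic $G$ the map $n\mapsto x^n\bmod G$ is injective on $\Z_{\ge0}$: if $x^a\equiv x^b\pmod G$ with $a>b$ then, $x$ being a unit modulo $G$, $G\mid x^{a-b}-1$, so $G$ would be a product of cyclotomic polynomials. Hence $G\mid F=\sum_{i=0}^k x^{n_i}$ (with $n_0=0$) pins down $n_k$ once $n_1<\cdots<n_{k-1}$ are chosen, so $\Po(G\mid F)\le\binom{N}{k-1}/\binom{N}{k}=k/(N-k+1)$. Since any factor of $F$ has coefficients bounded in terms of its degree and $\|F\|_2=\sqrt{k+1}$, there are only finitely many candidates $G$ of degree $\le D$, and therefore $\Po(F\text{ has a non-cyclotomic factor of degree}\le D)\to0$ as $N\to\infty$, for every fixed $D$ and $k$.

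\textbf{Non-cyclotomic factors of large degree --- the main obstacle.} For degrees growing with $N$ the union bound above collapses: already the cyclotomic polynomials, and families such as $x^d-x-1$, show that the number of polynomials of degree $d$ with Mahler measure $\le\sqrt{k+1}$ can grow super-polynomially in $d$, so a structural input is essential. Smyth's theorem, that a non-reciprocal irreducible polynomial other than $\pm x$ has Mahler measure at least $\theta_0=1.3247\ldots$, combined with $M(F)\le\|F\|_2=\sqrt{k+1}$, shows that $\Psi$ has at most $O(\log k)$ non-reciprocal irreducible factors; and a reciprocal non-cyclotomic factor $G$ of $\Psi$ must divide $\gcd(F,\widetilde F)$, where $\widetilde F(x)=x^{n_k}F(1/x)$ is again a $0,1$-polynomial of the same form, so its presence means $F$ and $\widetilde F$ share a non-cyclotomic factor, i.e. $F$ has a pair of non-cyclotomic inverse roots. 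In both remaining regimes — a large-degree reciprocal factor (equivalently, a shared non-cyclotomic factor of $F$ and $\widetilde F$) and a large-degree non-reciprocal factor — I expect the decisive tool to be the lacunary structure itself: a typical $F$ has a gap of length $\gg N/k$ in its support, and via a structure theorem in the spirit of Schinzel (a product of polynomials with few terms has factors that are, up to cyclotomic factors, themselves lacunary) one can force $\operatorname{supp}(F)$ to sit inside the sumset of two short sets, whereupon a count against the $\binom{N}{k}$ polynomials of the given form should close the argument. Making this count effective and uniform in the degree of the putative factor is the principal difficulty, and is presumably where the new method of the paper is concentrated.
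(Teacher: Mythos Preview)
Your proposal has the difficulty located exactly backwards, and the part you did not finish is in fact the easy one.

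\textbf{Non-cyclotomic factors.} You split off bounded-degree non-cyclotomic factors with a clean injectivity argument, and then say the large-degree case is ``the main obstacle'' and ``presumably where the new method of the paper is concentrated,'' before offering only speculation. In the paper this entire case is a one-line citation: Schinzel's Corollary~2 (what the paper calls Fact~1) gives directly that the number of tuples $(n_1,\dots,n_k)$ with $0<n_1<\cdots<n_k\le N$ for which $1+\sum x^{n_i}$ has a non-trivial non-cyclotomic factor is at most $c(k)\,N^{(k+1)/2}$, which is $o\!\left(\binom{N}{k}\right)$ as $N\to\infty$ for every fixed $k\ge 2$. No separation by the degree of the factor, no Smyth, no sumset structure is needed. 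So your ``main obstacle'' is not one, but you have not actually bridged it; as written the proposal is incomplete precisely there.

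\textbf{Cyclotomic factors.} Conversely, this is where essentially all of the paper's work lies, and your sketch has a genuine gap. You obtain $\limsup_N \Po(\Phi_m\mid F)\ll_m k^{-\varphi(m)/2}$ and then assert ``summing over $m$, the total is $O(k^{-1/2})$.'' But the implied constant depends on $m$, and even after your Galois reduction the relevant $m$ range over an \emph{infinite} set (being $(k{+}1)$-smooth still allows arbitrarily high prime powers), so the sum does not converge on these grounds alone. The paper's proof of Theorem~2 occupies three sections: for small $n$ it counts lattice points in the space of relations among $n$-th roots of unity against the multinomial maximum, for intermediate $n$ it uses that $\Phi_n\mid f$ determines the first $\varphi(n)$ coefficients of $f$, and for large $n$ it invokes the Filaseta--Schinzel bound on the squarefree part together with the Conway--Jones decomposition into vanishing subsums modulo the non-squarefree part. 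Your single local--CLT estimate does not substitute for this, because the whole issue is uniformity in $m$ and control of the tail of the sum.
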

The proof of the theorem will rest on two facts, the first of which is drawn from \cite[Corollary 2]{schinzel}, which in our case reads:

\textit{Fact 1:} Let $\vctr{\alpha}=[\alpha_0,\dots,\alpha_k]\in\mathbb{Q}^{*k+1}$. The number of integer vectors $\vctr{n}=[n_1,\dots,n_k]$ such that $0<n_1<\dots<n_k\leq N$ and $\alpha_0+\sum_{j=1}^k\alpha_jx^{n_j}  $ has a non-trivial, non-cyclotomic factor, is less than $c(\vctr{\alpha})N^{(k+1)/2}$.

Here, we are using the above fact for $\alpha_0=\dots=\alpha_k=1$.

The next claim, which the majority of this paper aims at proving, is the following:
\begin{theorem}
For $F(x)$ as in Theorem 1, we have
\begin{equation}
\lim_{k\rightarrow\infty}\limsup_{N\rightarrow\infty}\Po(\text{$F(x)$ has a cyclotomic factor})=0.
\end{equation}
Here the probability is defined as the proportion of $F(x)$ as above that have a cyclotomic factor.
\end{theorem}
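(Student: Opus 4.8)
I would begin by reformulating the event. The polynomial $F$ has a cyclotomic factor precisely when $F(\zeta)=0$ for some root of unity $\zeta$; writing $m$ for the order of $\zeta$, this is the statement that $1+\sum_{i=1}^{k}\zeta^{n_i}=0$ is a vanishing sum of $k+1$ roots of unity. Since this depends on $\vctr n=(n_1,\dots,n_k)$ only through the residues $n_i\bmod m$, equidistribution gives, for each fixed $m$, that $\Po(\Phi_m\mid F)\to\pi_m(k)$ as $N\to\infty$, where $\pi_m(k)$ is the probability that $k$ independent uniform residues modulo $m$ (together with the residue $0$ forced by the constant term) form such a vanishing sum. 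It then suffices to prove $\limsup_{N}\Po(\exists m:\Phi_m\mid F)\to 0$ as $k\to\infty$, and I would split the union over $m$ at a threshold $M_0=M_0(k)$.

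The structural input is the classical theory of vanishing sums of roots of unity. Decomposing $\{1,\zeta^{n_1},\dots,\zeta^{n_k}\}$ into minimal vanishing subsums and applying Mann's theorem (refined by the Conway--Jones classification of short minimal relations), each minimal subsum, after division by one of its terms, is one of finitely many fixed configurations all of whose roots have order dividing $Q:=\prod_{p\le k+1}p$. Three consequences drive the rest of the argument. First, there are at most $\lfloor(k+1)/2\rfloor$ minimal subsums, since each has at least two terms. Second, with $e:=m/\gcd(m,Q)$, the exponents $n_i$ lying in one subsum are mutually congruent modulo $e$, and the subsum through the constant term pins that residue to $0$; hence modulo $e$ the $n_i$ occupy at most $\lfloor(k+1)/2\rfloor$ residue classes, one of which is $0$, and $e\ge m/Q$. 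Third, if some minimal subsum consists of all $p$-th roots of unity for a prime $p\ge 3$, then $m/p\ge m/(k+1)$ divides the difference of two of the $n_i$ (and if that subsum contains the constant term, it divides two of the $n_i$ themselves). Informally: a cyclotomic factor confines $\vctr n$, modulo a divisor $e$ of $m$, to a union of very few residue classes, and $e$ is forced to be large once $m$ is large compared with $Q$.

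For $m\le M_0$ I would use the union bound $\Po(\exists m\le M_0:\Phi_m\mid F)\le\sum_{2\le m\le M_0}\Po(\Phi_m\mid F)$, which converges to $\sum_{m\le M_0}\pi_m(k)$. A local-limit estimate for the underlying multinomial gives $\pi_2(k)=O(k^{-1/2})$ and $\pi_m(k)=O(k^{-1})$ for $m\ge 3$ (indeed $\pi_m(k)=O(k^{-\phi(m)/2})$, and $\pi_m(k)=0$ for many $m$, e.g.\ primes $m>k+1$), so that the $m=2$ term dominates. For $m>M_0$ I would invoke the structural consequences: such a tuple has, for some $e\ge M_0/Q$ (or, via the third consequence, for some common divisor $\ge M_0/(k+1)$ of two of its entries), the property that its $k$ entries fall into at most $\lfloor(k+1)/2\rfloor$ residue classes modulo $e$; counting such tuples for each $e$ and summing the resulting (essentially geometric) series over $e$ and over the bounded list of admissible configurations yields a bound of the shape $k^{O(k)}(M_0/Q)^{-(k-3)/2}$, which tends to $0$ provided $M_0$ is a large enough power of $k$ times $Q$ — and a mere power of $k$ in the situations covered by the third consequence.

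The crux, which is where the bulk of the work goes, is reconciling the two ranges. The large-$m$ bound improves as $M_0$ grows, but the small-$m$ sum then runs over more moduli, and the only bound on $\pi_m(k)$ available directly from the configuration count carries a constant exponential in $k^2$ (the number of admissible minimal configurations). Making the argument close therefore requires either a genuinely uniform estimate for $\pi_m(k)$ — equivalently, a bound polynomial in $m$, with exponent essentially $(k+1)/2$ and only a modest $k$-dependent constant, on the number of $0,1$-polynomials of degree below $m$, with $k+1$ terms, divisible by $\Phi_m$ — or a descent lemma: for a $0,1$-polynomial $F$, divisibility by $\Phi_m$ with $m$ large forces divisibility by $\Phi_{m'}$ for some substantially smaller $m'\mid m$ (the phenomenon visible in examples such as $\Phi_{15}\mid 1+x^5+x^{10}$ and $\Phi_{200}\mid(1+x)(1+x^{100})$), which would collapse the problem to moduli bounded in terms of $k$. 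I expect this step — identifying and proving the right such statement, together with the attendant counting — to be the main obstacle.
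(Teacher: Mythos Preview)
Your outline is structurally sound and you have correctly located the difficulty, but the proposal stops precisely at the step that carries the proof, and the parameters as stated do not close. With $Q=\prod_{p\le k+1}p\asymp e^{k}$, your large-$m$ argument via $e=m/\gcd(m,Q)$ only bites once $m$ is well beyond $e^{k}$, while your small-$m$ union bound $\sum_{m\le M_0}\pi_m(k)$, with nothing sharper than $\pi_m(k)=O(k^{-\phi(m)/2})$ and implicit constants left uncontrolled, cannot be summed over $m$ up to anything like $e^{k}$. The two ranges do not meet, and neither of the two escape routes you name (a uniform $\pi_m(k)$ bound, or a descent lemma) is actually supplied.

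The paper fills both gaps, and in a way worth noting because it matches your wish-list. First, the descent lemma you are looking for already exists: Corollary~1 (Filaseta--Schinzel) says that if $F$ has any cyclotomic factor then it has a factor $\Phi_m$ with $2+\sum_{p\mid m}(p-2)\le k+1$, which bounds the \emph{squarefree part} of $m$ by $e^{c\sqrt{k}\log k}$ rather than $e^{k}$; this is what makes the Conway--Jones splitting (into subsums over residue classes modulo the non-squarefree part $b$ of $m$) effective for large $m$, since now $b\ge m/e^{c\sqrt{k}\log k}$. Second, for small $m$ the paper does obtain a summable bound on $\pi_m(k)$, but not via a bare local-limit theorem: it builds an explicit $\mathbb{Z}$-basis for the lattice $R\subset\mathbb{Z}^n$ of vanishing sums, shows (Lemma~1) that its fundamental mesh has diameter at most $\omega(n)\sqrt{n}$, and then counts lattice points of $R$ in a ball of radius $\sqrt{k}\log k$ about the multinomial mean, weighting each by the maximal atom $\binom{k}{k/n,\dots,k/n}/n^{k}$. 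This, together with a separate intermediate range $k/\exp(\sqrt{\log k})<n\le e^{k/(24\log k)}$ handled by conditioning on the number $K_n$ of exponents landing in the first $\phi(n)$ residue classes, gives summable bounds all the way up to where the Filaseta--Schinzel/Conway--Jones argument takes over. In short: your two-range split must become a three-range split, and each range needs a dedicated tool you have not yet invoked.
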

\section{Preliminaries I}
Here we discuss the distributions that will be used in the arguments and provide their justification.

The path to proving Theorem 2 begins with bounding the probability that $F(x)$ has a cyclotomic factor $\Phi_n(x)$, for fixed $k$ and $n$, and for $N$ growing to infinity. Detecting $\Phi_n(x)\mid F(x)$, will involve as a first step throughout this work, considering the remainder
$$  f(x)=F(x) {\hskip -5pt}\mod (x^n-1)=\sum_{j=0}^{n-1}c'_j x^j,  $$
where $c'_j$ counts the number of monomials in $F(x)$ whose exponents are congruent to $j$ modulo $n$. Then picking $\zeta_n$ any primitive $n$-th root of unity, we have $\Phi_n(x)\mid F(x)$ if and only if $f(\zeta_n)=0$. Instrumental to counting the latter probability will be to understand how the $n$-th dimensional vector $[c'_0,\dots,c'_{n-1}]$ is distributed, which we tend to now.

A small nuisance is that with our normalization of having constant coefficient 1 for $F(x),$ we immediately have $c'_0\geq 1$, and it will be convenient to refer to 
\[
\vctr{c}=[c_0,c_1,\dots,c_{n-1}]
=[c'_0-1,c'_1,\dots,c'_{n-1}]
\]
as \textit{the coefficient vector} for $f(x)$. It should be clear that as $F(x)$ ranges, the distribution of $\vctr{c}$ is close to the multinomial distribution with $k$ trials and $n$ equally likely outcomes, at least for $n$ and $k$ small compared to $N$. We quantify the closeness of the actual distribution of $\vctr{c}$ with the above multinomial, and justify as a result, that it is acceptable in the most important part of our estimates in the sequel, to use the multinomial distribution.

The number of polynomials $F(x)$ is equal to $\binom{N}{k}.$ Call $A_j$ the set of integers between $1$ and $N$ that are congruent to $j$ modulo $n$. Then the number of ways to achieve the coefficient vector $\vctr{c}$ is equal to
$$  \prod_{j=0}^{n-1}\binom{\#A_j}{c_j}.  $$ 
That is because the factors $\binom{\#A_j}{c_j}$ count the number of ways that we can pick $c_j$ elements from $A_j$, and once these are picked, the whole sequence of exponents $n_i$ of $F(x)$ can be recovered, through the ordering of the elements in all $A_j$'s. We get then that
\begin{align*}
\Po(\{\vctr{c}\})&=\frac{\prod_{j=0}^{n-1}\binom{\#A_j}{c_j}}{\binom{N}{k}}\\[5pt]
&=\frac{\prod_{j=0}^{n-1}(N/n)^{c_j} \big(1 + O_{k,n}\big(\frac{1}{N}\big)\big)^{c_j}/c_j!   }{N^k \big(1 + O_{k}\big(\frac{1}{N}\big) \big)^k/k! }\\[5pt]
&=\frac{1}{n^k}\binom{k}{c_0,c_1,\dots,c_{n-1}}\cdot \bigg(1 + O_{k,n}\bigg(\frac{1}{N} \bigg)\bigg),
\end{align*}

where the big-oh notation has its usual meaning so that $O_{k,n}(1/N)$ refers to a function which is bounded by a constant, only depending on $k$ and $n$, times $1/N$, for $N$ large. 
The main factor above is exactly the weight assigned to the coefficient vector $\vctr{c}$ by the multinomial distribution.
As will become clear from what follows, the arguments that require any assumption about the distribution of $\vctr{c}$ are referring to a range of $n$ which is bounded by a function of $k$. With $k$ fixed and $N$ tending to infinity then, the error term, corresponding to the big-oh expression for $\Po(\{\vctr{c}\})$ above, tends to zero. Having proved the desired claim at the level of each atom $\vctr{c}$, we will use the convergence of $\Po(\{\vctr{c}\})$ to the multinomial distribution for events that might involve a number of different atoms $\vctr{c}$, utilizing that the number of these atoms is a bounded function of $k$.

Finally, we will use some concentration bounds for the multinomial distribution, all of which will be derived from the following Chernoff bound for the binomial distribution.
Let $X\sim \text{Bin}(n,p)$, so $X$ satisfies a binomial distribution with $n$ possible outcomes each occurring with probability $p \in [0,1]$. Let $\mu=\mathbb{E}[X]$, the expected value of $X$. Then for any $\delta \in (0,1)$, we have
$$  \Po(|X-\mu|\geq \delta \mu)\leq 2\exp\left(\frac{-\delta^2\mu}{3}\right).  $$ 
\section{Preliminaries II}
Here we discuss some useful results about vanishing sums of roots of unity. They are drawn from J. H. Conway and A. J. Jones \cite{conjones}, but we repeat part of their treatment here to highlight how the results will be applied.

We introduce some of the necessary formalism. Consider the free $\Z$-module with basis the roots of unity. There is the obvious homomorphism from this module to the complex numbers, which is the evaluation map. The kernel of the evaluation map are the \textit{vanishing sums}. Let $S$ be a vanishing sum, which we assume involves only roots of unity which are powers of $\omega$, a primitive $n$-th root of unity. The argument which we present here aims at isolating smaller vanishing sums inside $S$, when $n$ is not squarefree.

We break up $S$, a vanishing sum into
$$ S=\sum_{i=0}^{b-1}S_i,  $$
where $S_i$ ranges over the terms in $S$ where the exponent of $\omega$ is congruent to $i\mod b$. By Theorem 1 of \cite{conjones}, we then have $S_i=0$, for $0\leq i\leq b-1$.

Pursuing further this line of argumentation, M. Filaseta and A. Schinzel in \cite{filsch} Corrolary 1, show the following, which shall also be invoked in the sequel.
\begin{corollary}
Let $f(x)\in\Z[x]$ have $k+1$ non-zero terms. If $f(x)$ is divisible by a cyclotomic polynomial, then there is a positive integer $m$ such that
$$ 2+\sum_{p\mid m} (p-2)\leq k+1\text{     and     }\Phi_m\mid f(x).   $$
\end{corollary}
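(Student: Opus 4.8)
The plan is to rephrase the hypothesis $\Phi_n\mid f$ as a vanishing sum of $n$-th roots of unity: if $\zeta_n$ is a primitive $n$-th root of unity, then $\Phi_n\mid f$ if and only if $f(\zeta_n)=0$, and $f(\zeta_n)$ is a sum of at most $k+1$ integer multiples of powers of $\zeta_n$. Writing $\kappa(m):=2+\sum_{p\mid m}(p-2)$, I would prove, by induction on the number of prime factors of $n$ counted with multiplicity, the following self-strengthening: if $g\in\Z[x]$ is nonzero with exactly $t$ nonzero terms and $\Phi_n\mid g$, then there is an $m\mid n$ with $\kappa(m)\le t$ and $\Phi_m\mid g$. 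The base case $n=1$ is immediate, since $\Phi_1=x-1$ and $g(1)=0$ with $g\ne 0$ force $t\ge 2=\kappa(1)$; the corollary then follows by taking $g=f$, $t=k+1$, and $n$ any index with $\Phi_n\mid f$.

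For the inductive step I would fix a prime $p\mid n$, reduce $g$ modulo $x^n-1$ so that $\deg g<n$ (a reduction that cannot increase the number of terms), and group the monomials of $g$ according to the residue of their exponents modulo $p$. If $p^2\mid n$, then $\zeta_n^p$ is a primitive $(n/p)$-th root of unity and $\{1,\zeta_n,\dots,\zeta_n^{p-1}\}$ is a basis of $\Q(\zeta_n)$ over $\Q(\zeta_{n/p})$ (the minimal polynomial of $\zeta_n$ over $\Q(\zeta_{n/p})$ being $X^p-\zeta_{n/p}$); this is exactly the break-up of vanishing sums recorded above (Theorem~1 of \cite{conjones}, applied with splitting modulus $p$), and it forces each of the $p$ rescaled pieces $h_0,\dots,h_{p-1}$ — obtained by dividing the exponents in each residue class by $p$ — to satisfy $\Phi_{n/p}\mid h_r$. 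One then applies the inductive hypothesis to a nonzero $h_r$ and transports the conclusion back to $g$ using the elementary observation that $\Phi_d\mid h_r$ for all $r$ gives $\Phi_d\mid g$ when $p\nmid d$ and $\Phi_{pd}\mid g$, with $\kappa(pd)=\kappa(d)$, when $p\mid d$. If instead $p\exactmid n$, write $g=g_0+\dots+g_{p-1}$ with $g_s$ collecting the monomials of $g$ whose exponent is $\equiv s\pmod p$; using that $\Phi_p$ is the minimal polynomial of $\zeta_p$ over $\Q(\zeta_{n/p})$ one finds $g_0(\zeta_{n/p})=\dots=g_{p-1}(\zeta_{n/p})=C$ for a single $C\in\Q(\zeta_{n/p})$. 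If $C=0$ then every $g_s(\zeta_{n/p})=0$, hence $g(\zeta_{n/p})=0$, i.e. $\Phi_{n/p}\mid g$, and we recurse on the same $g$ with the smaller modulus $n/p$. If $C\ne 0$ then no residue class is empty, so $t\ge p$; moreover $\Phi_{n/p}\mid(g_s-g_0)$ for $s=1,\dots,p-1$, and each of these differences has at most $t-(p-2)$ nonzero terms (the remaining $p-2$ classes each contribute at least one term to $g$). This is precisely where the summand $p-2$ enters: recursion on the differences yields an index $d\mid n/p$ with $\kappa(d)\le t-(p-2)$, and, combining the resulting divisibilities through the equivalence $\Phi_{pd}\mid g\iff \Phi_d\mid(g_s-g_0)\text{ for all }s$, one gets $\Phi_{pd}\mid g$ with $\kappa(pd)=(p-2)+\kappa(d)\le t$.

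The main obstacle is the bookkeeping hidden in the word ``combining'' above, together with the parallel difficulty in the $p^2\mid n$ case: the inductive hypothesis applied to a single difference $g_s-g_0$, or to a single piece $h_r$ when several are nonzero, produces a cyclotomic factor that need not be shared by the other differences or pieces, whereas the lift to $g$ requires one common index $d$ with $\Phi_d\mid(g_s-g_0)$ for all $s$ (resp. $\Phi_d\mid h_r$ for all $r$) and $\kappa(d)$ still bounded by the appropriate term count. Organizing the descent so that the extracted cyclotomic polynomial genuinely divides $g$ — rather than merely an auxiliary polynomial — while the $\kappa$-values telescope correctly against the running number of terms, is the delicate core of the argument (carried out in \cite{filsch}), and the part I would write out with the most care. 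The shape $2+\sum_{p\mid m}(p-2)$ of the bound is itself an artifact of this telescoping: the summand $2$ is the unavoidable base cost (a one-term polynomial has no cyclotomic factor), and each prime $p\mid m$ adds only $p-2$ because $p-1$ of the $p$ residue classes it forces can be amortized against the remainder of the recursion.
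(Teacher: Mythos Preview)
The paper does not prove this corollary: it is quoted from Filaseta--Schinzel \cite{filsch} (their Corollary~1) and invoked as a black box, with no argument supplied beyond the attribution. There is therefore no in-paper proof to compare your proposal against; anything you write here already goes further than the paper itself does.

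On the substance of your sketch: the architecture is sound --- induction on $\Omega(n)$, splitting by residue classes modulo a prime $p\mid n$, the dichotomy $p^2\mid n$ versus $p\exactmid n$, and the lifting equivalence $\Phi_{pd}\mid g\iff\Phi_d\mid(g_s-g_0)$ for all $s$ --- and you correctly locate the crux. But the obstacle you flag is a genuine missing idea, not bookkeeping. Applying the inductive hypothesis to a \emph{single} piece $h_r$ (when $p^2\mid n$) or a single difference $g_s-g_0$ (when $p\exactmid n$) produces an index $d$ that depends on that piece, while your lift back to $g$ requires one \emph{common} $d$ across all pieces simultaneously; nothing in the setup forces agreement, and there is no evident way to combine the individual $d$'s while keeping $\kappa$ under control. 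So the induction as stated does not close, and ``writing it out with the most care'' will not by itself repair it --- a different organization of the descent is required, one that keeps the divisibility conclusion attached to $g$ at every stage rather than to auxiliary polynomials, which is precisely what \cite{filsch} supplies.
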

This will be used primarily to restrict the size of the squarefree part of a candidate divisor $\Phi_n(x)$ of $F(x)$, in conjunction with the aforementioned decomposition into smaller vanishing sums in the spirit of Conway and Jones.

We now proceed with giving a rough bound on the squarefree part of a number $m$ satisfying the inequality of the Corollary. Since we are not interested in a rigorous bound, we proceed by successively relaxing the condition a bit.

Assume first that $30\mid m,$ and change the condition to
$$   \sum_{p\mid m,\,p\geq 7}(p-2)\leq k.  $$
We want to bound $\prod p$, where $p\geq 7$ prime and satisfy the above inequality. Instead remove the condition of being prime, and instead just ask that the summands, call them $n_i$'s now, with $1\leq i\leq I$, are an increasing sequence of integers $\geq 7$, which again simplifying, satisfy
$$\sum_i n_i\leq 7k/5.  $$
Generating the sequence $C_l=\sum_{j=1}^l(7+(j-1))$, we will induct a bound on $\prod_i n_i$, with
$$ \sum_i n_i\leq m $$
for $C_l< m\leq C_{l+1}$. Call that bound uniform in $m$ in the above range $B_l$. When $C_0< m\leq C_1$, it is obvious that $B_0=7$.

We prove that for $l\geq 1$, we have $B_l\leq 2^{l}\prod_{j=1}^{l+1}(7+(j-1))$. Subtracting the biggest term $n_I$, we fall into a previous interval, say $C_{l'}<m-n_I\leq C_{l'+1}$, where $l'<l$. We then have
\begin{align*}
\prod_{1\leq i \leq I} n_i&=n_I\prod_{1\leq i\leq I-1}n_i\\
&\leq (C_{l+1}-C_{l'}) \cdot B_{l'}\\
&=\left(\sum_{j=l'+1}^{l+1} (7+(j-1))\right)2^{l'}\prod_{j=1}^{l'+1}(7+(j-1))\\
&\leq 2\left(\sum_{j=l'+2}^{l+1} (7+(j-1))\right)2^{l'}\prod_{j=1}^{l'+1}(7+(j-1))\\
&\leq 2^{l'+1} \prod_{j=l'+2}^{l+1}(7+(j-1))\prod_{j=1}^{l'+1}(7+(j-1))\\
&\leq 2^{l}\prod_{j=1}^{l+1}(7+(j-1)).
\end{align*}
We conclude using the fact that in our case $l<2\sqrt{k}, $ putting everything together with Corollary 1, that if $F(x)$ has a cyclotomic factor, then it is divisible by $\Phi_m(x)$, where $m$ has squarefree part at most
$$ C2^{2\sqrt{k}}(2\sqrt{k})! . $$
Concluding this section, we give here a description of the space of vanishing sums involving $n$-th roots of unity. First, we introduce the $\Z$-module $M=\oplus_{i=0}^{n-1}\Z\cdot \zeta_n^i$, where $\zeta_n$ is a fixed primitive $n$-th root of unity. Like we described earlier, we can define the evaluation map from $M$ to $\mathbb{C}$, whose kernel are the vanishing sums or \textit{relations} in the roots of unity. Call this space of relations $R$. We then have, since the image of the evaluation map is $\Z[\zeta_n]$, which is of dimension $\phi(n)$, that $R$ has dimension $n-\phi(n)$.

Our main goal here is to describe a basis for $R$ as a $\Z$-module. The most convenient way to do this is by relabeling the basis elements of $M$. Let then $n=p_1^{e_1}\cdots p_j^{e_j}$ be the prime factorization of $n$, and call $\zeta_1,\dots, \zeta_j$ some fixed $p_1^{e_1},\dots,p_j^{e_j}$-th primitive roots of unity. No problem of confusion with $\zeta_n$ shall arise from this convention, so we suppress the appearance of the primes at hand from the notation. Each power then of $\zeta_n$ can be expressed as a product of powers of $\zeta_i$'s. Say
$$  \zeta_n^l=\prod_{i=1}^j \zeta_i^{a_j}.  $$ 
Then map the $i$-th coordinate of the module $M$ into the $j$-tuple $[a_1,\dots,a_j]$. We proceed in the description of a basis for $R$ by induction on the number of prime factors of $n$. When $n=p_1^{e_1}$, the basis is
$$B_1=\{\zeta_1^i(1+\zeta_1^{p_1^{(e_1-1)}}+ \zeta_1^{p_1^{(e_1-1)}\cdot 2}+\cdots\zeta_1^{p_1^{(e_1-1)}(p_1-1)}),0\leq i\leq p_1^{e_1-1}-1\}. $$
A justification for this is as follows. First, observe that $\# B_1=p_1^{e_1}-\phi(p_1^{e_1})$, as should be. Also, notice that the elements of $B_1$ are orthogonal in a strong sense, that is their coordinates have disjoint support in the chosen basis for $M$. Finally, it is easy to check that quotienting $M$ by the elements of $B$ gives exactly the ring of integers $\Z[\zeta_1]$.

We proceed with a description of the basis for $j>1$. Having constructed the basis $B_{j-1}$, the basis $B_j$ can be taken to be the union of two sets, one of which is
$$ A=\{\zeta_j^i\cdot y, 0\leq i\leq p_j^{e_j}-1, y\in B_{j-1}\}.  $$
For the other set, consider any integral basis $B'$ of $\Z[\zeta_1\cdots \zeta_{j-1}]$ comprising of roots of unity, as well as the set
$$ B''=\{\zeta_j^i(1+\zeta_j^{p_1^{(e_j-1)}}+ \zeta_j^{p_1^{(e_j-1)}\cdot 2}+\cdots\zeta_j^{p_j^{(e_j-1)}(p_j-1)}),0\leq i\leq p_j^{e_j-1}-1\} , $$
same as $B_1$, but with $p_1$ replaced with $p_j$. Now take 
$$ A'=\{xy,x\in B',y\in B''\}.   $$
We have 
$$ B_j=A\cup A''.  $$
Again, we offer here a justification as to why $B_j$ is a basis for $R$. One can check that $\# B_j=n-\phi(n)$. Also that each element of $B_j$ is a relation, and that quotienting $M/\langle B_j\rangle\subset \mathbb{Z}[\zeta_n],$ by a simple induction for example. This implies $R=\langle B_j\rangle$, and since $\# B_j=n-\phi(n)$, $B_j$ is a basis for $R$.



Having described a basis for $R$, we now conclude this section by proving the following lemma.
\begin{lemma}\label{length} Let $R$, the space of relation, be as above, and for its basis $B_j$, consider the fundamental mesh
$$ F=\left\{ \sum_{i=0}^{n-\phi(n)} x_i b_i| [x_1,\dots,x_{n-\phi(n)}]\in [0,1]^{n-\phi(n)} \right\}  ,$$  
where the $b_i$'s range over the above described elements of $B_j$. Then we have the maximum length among elements of $F$ at most
$$ j \sqrt{n} .$$
The length in question is the one inherited from the embeddings $R\hookrightarrow M\hookrightarrow M\otimes \mathbb{R}$, where $\forall l$, $\mathbb{R}^l$ in all our treatment, is endowed with the Euclidean norm.
\end{lemma}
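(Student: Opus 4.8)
The plan is to prove the bound by induction on $j$, the number of distinct prime factors of $n$, exploiting the recursive description of $B_j$ together with the orthogonality that is built into it. Identify $M\otimes\mathbb{R}$ with $\mathbb{R}^{n}$ via the orthonormal basis $\{\zeta_n^{l}\}$, and write $n=n'\,q$ with $q=p_j^{e_j}$ and $n'=n/q$; the relabeling of coordinates then realizes $M$ as (the module underlying) $M'\otimes M_j$, where $M'=\bigoplus_{l=0}^{n'-1}\mathbb{Z}\,\zeta_{n'}^{l}$ and $M_j=\bigoplus_{s=0}^{q-1}\mathbb{Z}\,\zeta_j^{s}$, the product basis $\{e'_l\otimes e_s\}$ is orthonormal, and $\|u\otimes e_s\|=\|u\|$ for $u\in M'\otimes\mathbb{R}$. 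Under this identification, elements of $B_{j-1}$ and of $B'$ live in $M'\otimes\mathbb{R}$.

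For the base case $j=1$, so $n=p_1^{e_1}$, note that the $p_1^{e_1-1}$ elements of $B_1$ have pairwise disjoint supports in the standard basis, each of size $p_1$ with all nonzero coordinates equal to $1$, and together these supports partition $\{0,\dots,n-1\}$. Hence if $v=\sum_{b\in B_1}\alpha_b b$ with every $\alpha_b\in[0,1]$, then each coordinate of $v$ equals some $\alpha_b\in[0,1]$, so $\|v\|^{2}\le n$ and $\|v\|\le\sqrt n=1\cdot\sqrt n$.

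For the inductive step, take $v\in F$ and decompose it according to the two families $A$ and $A'$ making up $B_j$. Every element of $A$ has the form $y\otimes e_s$ with $y\in B_{j-1}$ and $0\le s<q$; every element of $A'$ has the form $e_x\otimes b''_{s_0}$, where each $x\in B'$ is, in the relabeled coordinates, $\pm$ a single standard basis vector of $M'$ (since $n'\mid n$ and a root of unity in $\mathbb{Z}[\zeta_{n'}]$ is $\pm$ a power of $\zeta_{n'}$), and $b''_{s_0}=\sum_{s\in S_{s_0}}e_s$ with the blocks $S_{s_0}$ $(0\le s_0<q/p_j)$ partitioning $\{0,\dots,q-1\}$ exactly as in the $j=1$ case. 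Collecting the terms of $v$ by the second tensor factor gives $v=\sum_{s=0}^{q-1}c_s\otimes e_s$ with $c_s=w_s+u_{\sigma(s)}$, where $w_s:=\sum_{y\in B_{j-1}}\alpha_{s,y}\,y$ lies in the fundamental mesh of $B_{j-1}$, $u_{s_0}:=\sum_{x\in B'}\beta_{s_0,x}\,e_x$, $\sigma(s)$ is the index of the block containing $s$, and all coefficients $\alpha_{s,y},\beta_{s_0,x}$ lie in $[0,1]$. By the inductive hypothesis $\|w_s\|\le(j-1)\sqrt{n'}$; since $B'$ is an orthonormal (up to sign) set of size $\#B'=\phi(n')$, we get $\|u_{s_0}\|^{2}=\sum_{x}\beta_{s_0,x}^{2}\le\phi(n')\le n'$, so $\|u_{s_0}\|\le\sqrt{n'}$. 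Therefore $\|c_s\|\le(j-1)\sqrt{n'}+\sqrt{n'}=j\sqrt{n'}$, and, using orthonormality of the $e_s$,
$$\|v\|^{2}=\sum_{s=0}^{q-1}\|c_s\|^{2}\le q\,(j\sqrt{n'})^{2}=j^{2}\,qn'=j^{2}n,$$
whence $\|v\|\le j\sqrt n$, completing the induction.

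The bulk of the work is routine bookkeeping: describing precisely how the families $A$ and $A'$ sit inside $M'\otimes M_j$, and checking that the $B_1$-type blocks partition their index sets. The one point requiring care — and the only genuine obstacle — is confirming that each basis element $x\in B'$ occupies a single coordinate (so that $u_{s_0}$ satisfies the clean bound $\|u_{s_0}\|^{2}=\sum_{x}\beta_{s_0,x}^{2}$); this rests on $n'\mid n$ together with the classification of roots of unity in $\mathbb{Q}(\zeta_{n'})$, and one must also keep straight that passing from $B_j$ to $B_{j-1}$ drops the prime count by one, so the inductive hypothesis legitimately applies on $n'=n/p_j^{e_j}$.
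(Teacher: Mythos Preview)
Your proof is correct and follows the same inductive skeleton as the paper, but the execution of the inductive step is genuinely different. The paper first observes that all pairwise inner products among the $b_i$ are non-negative, so the maximum over $F$ is attained at the corner $v_j=\sum_{b\in B_j}b$; it then bounds $\|v_j\|^2$ by splitting $v_j$ into the $A$-part (which is $q$ orthogonal copies of $v_{j-1}$) and the $A'$-part, and carefully estimating the cross terms via the observation that any fixed power of $\zeta_{n'}$ appears at most $j-1$ times among the coordinates of $v_{j-1}$. You instead slice an arbitrary $v\in F$ along the second tensor factor as $v=\sum_s c_s\otimes e_s$ with $c_s=w_s+u_{\sigma(s)}$, bound each slice by the triangle inequality using the inductive hypothesis on $w_s$ and the orthonormality of $B'$ on $u_{\sigma(s)}$, and then sum. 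Your route is cleaner: it sidesteps both the non-negativity observation and the cross-term bookkeeping, at the cost of not identifying the maximizer. The one substantive point you invoke that the paper does not---that each $x\in B'$ is (up to sign) a single coordinate vector in $M'$---is indeed justified by the classification of roots of unity in $\mathbb{Q}(\zeta_{n'})$, and distinct $x$'s must hit distinct coordinates since $B'$ is linearly independent, so your orthonormality claim for $\{e_x\}$ holds.
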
 
\begin{proof}
We first observe that the chosen basis vectors have non-negative inner product. We can then infer that the maximum length in $F$, is achieved by the vector
$$  v_j=\sum_{a\in A}a+\sum_{a'\in A'}a'.  $$
Call $L_{j}$ the length of $v_{j}$ squared. The goal is to bound $L_ j$ by induction in $j$. When $j=1$, $v_1$ is a sum of $p_1^{e_1-1}$ orthogonal vectors of length $\sqrt{p_1}$ each. Therefore
$$   L_1=p_1^{e_1},  $$
establishing the base case.

To proceed with the induction step, we make the following observations. Using the ordered $j$-tuples $[a_1,\dots,a_j]$ parametrization of the coordinate system for $M$, and fixing each $a_j$, we see that the sum
$$ \sum_{a\in A} a,   $$
corresponds to $p_j^{e_j}$ orthogonal copies of $v_{j-1}$. Call each of those copies $w_y$, with $1\leq y\leq p_j^{e_j}$, so that $ \sum_{a\in A} a=\sum_y w_y$.

We also have that $\sum_{a' \in A} a'$ is a sum of $\phi(n')$ orthogonal vectors, each of length $\sqrt{p_j}$, where here $n'=n/p_j^{e_j}$. We finally remark that the inner product of any $a'\in A$, and any $w_y$ does not exceed the maximum number of occurences of any given $\zeta_{n'}$ $n'$-th root of unity, among the relations that make up $B_{j-1}$. That is trivially shown to not exceed $j-1$. Combining all the above information we get
\begin{align*}  L_j  &\leq p_j^{e_j}L_{j-1}+\phi(n')p_j+2(j-1)\phi(n')p_j^{e_j}  \\
&\leq  p_j^{e_j}L_{j-1}+(2j-1)\phi(n')p_j^{e_j}\\
&\leq  p_j^{e_j}(j-1)^2n'+(2j-1) \phi(n')p_j^{e_j}\\
&\leq  p_j^{e_j}(j-1)^2n'+(2j-1) n'p_j^{e_j}=j^2n.
\end{align*}
\end{proof}
\section{ Divisibility by $\Phi_n(x)$, $n\leq k/\exp(\sqrt{\log k})$}
In this section we bound $\Po(\Phi_n(x)\mid F(x))$. Like we explained in the first section of preliminaries, that is the case if $f(\zeta_n)=0,$ where $f(x)=F(x)\mod (x^n-1)$, and $\zeta_n$ is a primitive $n$-th root of unity. Here is where we want to use the lattice description of the relations extensively.

We start with the simple observation that for fixed $n$, the maximum weight that $f(x)$ can be assigned in the multinomial distribution is
$$ \binom{k}{k/n,\dots,k/n}/n^k,  $$
where in the case where $n\nmid k$, we interpret the above via the Gamma function. We warn the reader that we will only be giving special treatment to small $n\leq 6,$ where $\phi(n)\leq 3$ For instance, in the case $n=2$, we want to impose the necessary condition $2\mid k$. We have that $\Phi_2(x)=x+1$, and we want $f(x)$ to have $f(-1)=0$, which happens only when $f(x)=k/2\cdot x+k/2.$ This implies that for big $k$, we have
$$\Po(x+1\mid F(x))\sim \sqrt{\frac{2}{\pi k}}. $$
For the remaining $n$, and assuming $\phi(n)\geq 4$, we want to further split $\Phi_n(x)\mid F(x)$ into two events. We now give their description. Recall the space of relations in the roots of unity $R$ from the previous section. Let $\vctr{\mu}=(k/n,\dots,k/n)\in \Q^n.$ Call $B$ the ball $B=\{x\in R||x-\vctr{\mu}|\leq \sqrt{k}\log k\},$ where the distance is taken to be the Euclidean distance in $\Z^n$. We then consider the cases $f(x)\in R\cap B$, and $f(x)\in R\setminus B$.

Starting with the easiest of the two, we first bound $\Po(f(x)\in R\setminus B)$ with a concentration bound. From $|x-\vctr{\mu}|> \sqrt{k}\log k$, we have that at least one of the coordinates of $x$, say $x_i$, must satisfy $|x_i-k/n|> \sqrt{k/n}\log k$. Each individual coordinate of $x$ however, follows the binomial distribution with $k$ trials, and probability of success $1/n$. We then see from the Chernoff bound that this happens with probability less than
$$ 2\exp(-\log^2 k/3). $$
Since this can happen for any of the $n$ coordinates of $x$, and summing over all $n$ in the range $3\leq n\leq k/\exp(\sqrt{\log k})$, we get that the probability that $f(x)\in R\setminus B$ does not exceed
$$ 2k^2 \exp(-\log^2 k/3), $$
which as $k$ grows tends to zero.

Now, for fixed $n$, we bound $\Po(f(x)\in R\cap B).$ Assume that $R\cap B\neq\emptyset,$ and fix some $x_0\in R\cap B$. Then for any other point $x\in R\cap B,$ we have that $|x-x_0|\leq 2\sqrt{k}\log k$, and $x-x_0\in R,$ lies in a $n-\phi(n)$ dimensional $\Z$-space. We are then interested in counting lattice points in a sphere of radius $2\sqrt{k}\log k$, and dimension $n-\phi(n)$. Using the basis described in the Preliminaries II section, we proceed by adjoining an oriented copy of the fundamental mesh $F$ at each lattice point. The number of lattice points in question then can be bounded by the volume of the resulting shape over the volume of $F$. Using Lemma 1 \ref{length}, we see, in combination with the elementary fact that $\omega(n)=O(\log n/\log \log n)$, that the maximum vector length inside the fundamental mesh is $o(\sqrt{k})$, so that the resulting shape from adjoining the fundamental mesh is fully contained in a ball of radius $2.1\sqrt{k}\log k$ for big $k$. Using the formula for the volume of the $n-\phi(n)$-th dimensional sphere, we finally get that 
$$   \#(R\cap B)\leq (2.1\sqrt{k}\log k )^{n-\phi(n))}\frac{\pi^{\frac{n-\phi(n)}{2}}}{\Gamma(\frac{n-\phi(n)}{2}+1)} /\text{vol}(F).  $$
We also have that $\text{vol}(F)\geq 1$. That comes from the fact that for any lattice with basis $\{v_1,\dots, v_i\}$, we have that the volume of the fundamental mesh is given by
$$ \text{vol}(F)=\left|\det (\langle v_i,v_j\rangle)\right|^{1/2},  $$
which in our case is the square root of a natural number. We note here that it would be interesting to know whether $\text{vol}(F)$ has a nice description in terms of other well known invariants of $\Q(\zeta_n)$.

Putting everything together, we see that 
\begin{equation}\label{lattice} \Po(f(x)\in R\cap B)\leq   \binom{k}{k/n,\dots,k/n}/n^k \cdot (2.1\sqrt{k}\log k )^{n-\phi(n))}\frac{\pi^{\frac{n-\phi(n)}{2}}}{\Gamma(\frac{n-\phi(n)}{2}+1)} .\end{equation}
We now proceed to show that adding the above for $7\leq n \leq k/\exp(\sqrt{\log k})$, gives a sum that converges to zero as $k\rightarrow \infty$. Using the bound
\begin{equation}\label{Stirling}  \sqrt{2\pi n}\left(\frac{n}{e}\right)^n \leq n!\leq \sqrt{2\pi n}\left(\frac{n}{e}\right)^ne^{ \frac{1}{12n}} ,\end{equation}
from page 24 of \cite{Artin}, one gets that
\begin{align*}  \binom{k}{k/n,\dots,k/n}/n^k&=\frac{k!}{(k/n)!^n} \\
&<2\frac{1}{\sqrt{2\pi}^{n-1}}\frac{n^{n/2}}{k^{(n-1)/2}}.
 \end{align*}
Treating first the case where $n<\log^{2/3}k$, we then get that the sum in question is less than
\begin{align*}  &\sum_{7\leq n<\log^{2/3} k}2\frac{1}{\sqrt{2\pi}^{n-1}}\frac{n^{n/2}}{k^{(n-1)/2}} \cdot (2.1\sqrt{k}\log k )^{n-\phi(n))}\frac{\pi^{\frac{n-\phi(n)}{2}}}{\Gamma(\frac{n-\phi(n)}{2}+1)}\\
&\leq C \sum_{7\leq n<\log^{2/3} k} 2\sqrt{k}\frac{2.1^{n-1}}{\sqrt{2\pi}^{n-1}}\cdot n^{n/2}\cdot \frac{1}{k^{\phi(n)/2}}\cdot \log^n k\\
&\leq 2\sqrt{k}\sum_{7\leq n<\log^{2/3} k} (\log^{2/3}k)^{\log^{2/3} k}\cdot \frac{1}{k^2}\cdot (\log k)^{\log^{2/3} k}\\
&=\frac{2}{\sqrt{k}}\cdot  \log^{2/3} k\cdot  (\log^{2/3}k)^{\log^{2/3} k}\cdot (\log k)^{\log^{2/3} k}\rightarrow 0,
 \end{align*}
where in the first inequality we used the boundedness of $\pi^{n-\phi(n)}/\Gamma((n-\phi(n))/2+1)$. For the range $\log^{2/3} k\leq n\leq k/\exp(\sqrt{\log k})$, we also use the estimate
$$ \frac{1}{ \Gamma(\frac{n-\phi(n)}{2}+1)}<\sqrt{2\pi}\frac{e^{\frac{n-\phi(n)}{2}}}{\left( \frac{n-\phi(n)}{2}  \right)^{\frac{n-\phi(n)}{2}}}.  $$
We then find
\begin{align*}
&\sum_{\log^{2/3} k\leq n\leq k/\exp(\sqrt{\log k})}2\frac{1}{\sqrt{2\pi}^{n-1}}\frac{n^{n/2}}{k^{(n-1)/2}} \cdot (2.1\sqrt{k}\log k )^{n-\phi(n))}\frac{\pi^{\frac{n-\phi(n)}{2}}}{\Gamma(\frac{n-\phi(n)}{2}+1)}\\
&\leq 2\sqrt{k}\sum_{\log^{2/3} k\leq n\leq k/\exp(\sqrt{\log k})}C^n\frac{n^{n/2}}{(n-\phi(n))^{(n-\phi(n))/2}}\cdot \frac{k^{(n-\phi(n))/2}}{k^{n/2}}\cdot \log^n k.
\end{align*}
Using the monotonicity of $g(x)=k^x/x^x$, for $x\in[1,n/2]$, we see that if $x=n-\phi(n)<n/2$, we see that we get an upper bound in the above by replacing $n-\phi(n)$ with $n/2$. This gives terms of the form $C^n(n/k)^{n/4}\log^n k$. If instead we have $n-\phi(n)\geq n/2$, we get then the bound
$$2\sqrt{k}\sum_{\log^{2/3} k\leq n\leq k/\exp(\sqrt{\log k})}C'^n\left(\frac{n}{k}\right)^{\phi(n)/2}\cdot \log^n k.  $$
Both bounds can be replaced with the weaker
$$  2\sqrt{k}\sum_{\log^{2/3} k\leq n\leq k/\exp(\sqrt{\log k})}C'^n\left(\frac{n}{k}\right)^{\phi(n)/4}\cdot \log^n k.    $$
Using the fact that for big $n$ as in this case $\phi(n)>n/(2\log \log n)$, see \cite{facts} Theorem 15, we see that 
\begin{align*}
\left(\frac{n}{k}\right)^{\phi(n)/4}&\leq \exp(\log(n/k)n/(8\log\log n)  )\\
&\leq \exp(\log(n/k)n/(8\log\log k)  )\\
&\leq \exp(-\sqrt{\log k}\cdot n/(8\log\log k) )\\
&<\exp(-\log^{1/3} k)^n.
\end{align*}
This implies that the sum in question is smaller than the sum of a geometric series with the same initial term and ratio say $\exp(-\log^{1/4} k)$. This then concludes that the sum is in fact very small and approaches zero as $k\rightarrow \infty$.

We now treat the remaining cases for $n$. We already addressed $n=2$ in the beginning. For $n=3$, and $n=5$, we have only one relation
$$  1+\zeta_3+\zeta_3^2=0,\text{   and   }1+\zeta_5+\cdots \zeta_5^4=0,   $$
respectively. We see then that $F(\zeta_3)\mod (x^3-1)=0$ and $F(\zeta_5)\mod (x^5-1)=0$ iff $F(x)\mod (x^3-1)=k/3x^2+k/3x+k/3$ and $ F(x)\mod (x^5-1)=k/5x^4+k/5x^3+\cdots+k/5 $ respectively. These are events that occur with probabilities $  \binom{k}{k/3,k/3,k/3}  $ and $\binom{k}{k/5,\dots, k/5}$, which are of the order of magnitude $1/k$ and $1/k^2$.

Dealing now with $n=4$, using \ref{lattice} and Stirling's formula, we a bound of the order of $\log^2 k/\sqrt{k}$ for $\Po(x^2+1\mid F(x))$, and for $n=6$, again \ref{lattice} gives a bound of order $\log^4 k/\sqrt{k}$ for $\Po(x^2-x+1\mid F(x)).$

In this section then, we have concluded that $\Po(\lor_n (\Phi_n\mid F(x)))$, when $1\leq n\leq k/\exp(\sqrt{\log k})$, has an upper bound that tends to zero as $k\rightarrow \infty.$ We remark that $n=1$ is trivial, since $x-1$ never divides $F(x)$, and that in our arguments we used in the notation of Preliminaries I, the vector $\vctr{c}$, instead of the typically correct $\vctr{c'}$. The reader can be convinced that this does not affect the validity of the argument, and the choice was made to simplify the presentation.

\section{Divisibility by $\Phi_n(x)$ for $k/\exp(\sqrt{\log k})<n\leq e^{k/(24\log k)}$}
We want to start with the observation that $\phi(n)>n/(2\log \log n)$, implies for the above range that $\phi(n)>n/(2\log k)$. We want to make use of the fact that the condition $\Phi_n(x)\mid F(x)$, or equivalently $\Phi_n(x)\mid f(x)$, implies that the initial $\phi(n)$ coefficients of $f(x)$ in degree 0 up to $\phi(n)-1$, are determined by the remaining higher degree coefficients by precisely the condition $ \Phi_n(x)\mid f(x)$. Call $K_n$ the sum of the coefficients of $f(x)$ in that initial range. We also observe that the quantity $K_n$ as $F(x)$ ranges is a random variable following $B(k,\phi(n)/n)$, the binomial distribution with $k$ trials, and probability of success $\phi(n)/n$. Then the probability that $\Phi_n(x)\mid F(x)$, conditional on $f(x)$ having $K_n=m,$ does not exceed the weight of the biggest atom in the multinomial distribution with $m$ trials and $\phi(n)$ outcomes.

We proceed then with two steps. One is to show that with high probability for each $n$ we have $K_n>k\phi(n)/(2n)$. With that being the case, we can then bound the probability that $\Phi_n(x)\mid F(x)$, by finding a uniform bound for the heaviest atoms in the sequence of multinomial distributions $M(m,\phi(n)),$ with $m$ trials and $\phi(n)$ equally likely outcomes, where $m$ ranges in $[k\phi(n)/(2n),k]$. When this is complete, we add the bounds for all $n$ in the given range.

Using that $K_n$ follows $B(k,\phi(n)/n)$, we get by the Chernoff bound that
$$ \Po(K_n<k\phi(n)/(2n))\leq \exp(- k\phi(n)/(12n)) \leq \exp(-k/(12\log k)). $$
Adding over $k/\exp(\sqrt{\log k})<n\leq e^{k/(24\log k)}$, we see that 
$$\Po(\lor(K_n<k\phi(n)/(2n)))\leq \exp(- k/(24\log k)).$$
To bound the atoms of the multinomial distribution, it is convenient to split into two ranges, one where $m\geq\phi(n)$, and one where $\phi(n)>m$. When $m\geq \phi(n)$, we have that the heaviest atom has weight at most
$$  \frac{1}{\phi(n)^m}\frac{m!}{\Gamma(m/\phi(n)+1)^{\phi(n)} } . $$
Similar to $\ref{Stirling}$, we have
$$  \sqrt{2\pi x}\cdot x^{x}\cdot e^{-x}<\Gamma(x+1),  $$
and both can be found in \cite{Artin}. From that we get an upper bound of
$$  \frac{\sqrt{m}}{\sqrt{2\pi}^{\phi(n)-1}} \left(\frac{\phi(n)}{m} \right)^{\phi(n)}\leq\frac{\sqrt{m}}{\sqrt{2\pi}^{\phi(n)-1}}\leq\frac{\sqrt{k}}{\sqrt{2\pi}^{\phi(n)-1}}  $$
for the weight of the atoms with $m\geq \phi(n)$. We observe that the inequality $k\geq m\geq \phi(n),$ gives for example $n<k\log k$, and adding the atom weights in this range gives a sum that approaches zero as $k\rightarrow \infty$. In the remaining $\phi(n)>m$ range, we have that the maximum weight of each atom is
$$   \frac{m!}{\phi(n)^m},$$
where we apply the much simpler bound
\begin{align*}  \frac{m!}{\phi(n)^m}&\leq m/e^{m-1}\cdot m^m/\phi(n)^m\\
&\leq m/e^{m-1}\leq e k\phi(n)/(2n)/e^{k\phi(n)/(2n)} \\
&\leq \frac{e}{2}k /e^{ (k/(4\log k))}.
\end{align*}
This is more than enough to cover the range $k/\exp(\sqrt{\log k})<n\leq e^{k/(24\log k)}$.

\section{$e^{k/(24\log k)}\leq n\leq N\log N$}
In this final section we treat the condition $\Phi_n(x)\mid F(x)$ for big $n$. This is where we invoke Corollary 1, according to which and the subsequent comments, we can restrict our attention to those $n$ that have squarefree part of order less than $ e^{c\sqrt{k}\log k} $ for some constant $c$. 

Let $n=ab$, where $a$ is the squarefree part of $n$. Invoking then the Conway and Jones result from Preliminaries II, we have that $F(\zeta_n)=0$, with $F(x)=1+\sum_{i=1}^k x^{n_i}$ implies that 
$$S_j= \sum_ {n_i\equiv j \mod b}\zeta_n^{n_i}=0,\,\,\forall j.   $$
We will rely solely on the fact we do not expect many exponents to lie in any residue class $\mod b$. We first of all observe that $S_j$ cannot be zero if there is only a single exponent $n_i\equiv j \mod b$. We will as a first step show that it is with low probability that we expect for any $j$ to have three or more exponents $n_i\equiv j \mod b$. This will then allow us to infer that outside of an event of small probability, to require $S_j=0$, makes necessary that for all $j$, $S_j$  if non-empty, consists of exactly two summands. We conclude by bounding the probability that $S_j$ has always exactly two summands.

Call $A_j=\{i: 1\leq i\leq N,\,n\equiv j \mod b\}.$ Then we have $\forall j$, that $\# A_j=N/b+\epsilon_j$, where $\epsilon_j\in (-1,1)$. Following the same treatment as in the Preliminaries I, we see that if we fix $j$, having three elements in $A_j$, can be achieved in
$$   \binom{\#A_j}{3}\cdot \binom{N-3}{k-3}  $$ 
ways. This gives a probability to having three exponents $n_i\in A_j$, of
\begin{align*}  \frac{ \binom{\#A_j}{3}\cdot \binom{N-3}{k-3}}{\binom{N}{k}}&= \frac{1}{6}\frac{(\frac{N}{b}+\epsilon_j)(\frac{N}{b}+\epsilon_j-1)(\frac{N}{b}+\epsilon_j-2)}{\frac{N(N-1)(N-2)}{k(k-1)(k-2)}}\\
&\leq \frac{1}{3}\frac{ \frac{N}{b} \frac{N-1}{b}\frac{N-2}{b} }{\frac{N(N-1)(N-2)}{k(k-1)(k-2)}}\\
&=\frac{k(k-1)(k-2)  }{3b^3}.
\end{align*}
In the inequality, we are using the fact that $b$ is big. Taking into account the fact that the above can happen for $b$ choices of $j$, we have that the probability of having at least three exponents in any $A_j$, is at most
$$  \frac{k^3}{b^2}.  $$
Using the bound on the squarefree part of $n$, we have that $b\geq n/e^{c\sqrt{k}\log k}$, so that in total, the probability of having a three term $A_j$ for any $n>e^{k/(24\log k)}$ is at most
$$  \sum_{ n>e^{k/(24\log k)}}\frac{k^3 e^{2c\sqrt{k}\log k}}{n^2}, $$
which vanishes as $k$ grows.

Having established this claim, we now bound the probability that all $A_j$ have two exponents. Again we need to remark that having normalized $F(x)$ with constant term 1, makes automatically $A_0$ participate non-trivially with at least one summand in $S_0$. The argument we run earlier is not sensitive to small changes in the number of terms in $S_0$. If we want to be exact, we have to consider two cases in what follows, one where $S_0$ has two or three terms including one. Let's instead for notational unity just address the case of having two terms in each non-trivial $S_j$, with $k$ terms in total, disregarding the special form of $S_0$. The reader can be convinced that this is not to the detriment of the argument.

For a fixed choice of the $k/2$ $j$'s that are non-trivial, we have 
$$  \prod_j \binom{\#A_j}{2}   $$
ways to choose two exponents from each $A_j$, therefore gving a probability of this happening, of no more than
\begin{align*}
\frac{\prod_j \binom{\#A_j}{2}}{\binom{N}{k}}&=\frac{\prod_j \frac{1}{2}(\frac{N}{b}+\epsilon_j)(\frac{N}{b}+\epsilon_j-1) }{\binom{N}{k}}\\
&\leq \frac{ k! }{b^k},
\end{align*}
where here again similar to our earlier argument we use that $b$ is a lot bigger than $k$. Considering all the $\binom{b}{k/2}$ ways that we can pick such $j\mod b$, it is clear that having all $A_j$'s capacitate exactly two exponents or none, happens with probability at most
$$  \frac{k!}{(k/2)!}\cdot \frac{1}{b^{k/2}}.  $$
Same as above, using $b\geq n/e^{c\sqrt{k}\log k}$, we conclude that the total of these contributions is negligable as $k$ grows.
\section{Acknowlegments}
The author is indebted to Michael Filaseta for suggesting the topic, and providing many key references, most notably \cite{schinzel}.

\end{document}